\newtheorem{theorem}{Theorem}
{}
\newtheorem{corollary}{Corollary}
{}
\theoremstyle{plain}
{}
\begin{document}
	\begin{center}
		{\Large \bf{Rotational Surfaces with Pointwise 1-Type Gauss Map in Pseudo
				Euclidean Space $\mathbb{E}_{2}^{4}$ }}
	\end{center}
	\centerline{\large Ferdag KAHRAMAN AKSOYAK  $^{1}$, Yusuf YAYLI $^{2}${\footnotetext{
				{E-mail: $^{1}$ferdag.aksoyak@ahievran.edu.tr(F. Kahraman Aksoyak  $^{2}$yayli@science.ankara.edu.tr (Y.Yayli)}} }}
	
	\centerline{\it $^{1}$Ahi Evran University, Division of Elementary Mathematics Education
		Kirsehir, Turkey}
	\centerline{\it $^{2}$Ankara University, Department of Mathematics,
		Ankara, Turkey}

\begin{abstract}
In this paper, we study rotational surfaces of elliptic, hyperbolic and
parabolic type with pointwise 1-type Gauss map which have spacelike profile curve in four dimensional pseudo
Euclidean space $\mathbb{E}_{2}^{4}$ and obtain some characterizations for
these rotational surfaces to have pointwise 1-type Gauss map.
\end{abstract}

\begin{quote}\small
	{\it{Key words}:Pseudo- Euclidean space, Rotational Surfaces of elliptic,
		hyperbolic or parabolic type, Gauss map, Pointwise 1-Type Gauss map.}
\end{quote}
\begin{quote}\small
	2010 \textit{Mathematics Subject Classification}: 53B25 , 53C50 .
\end{quote}

\section{Introduction}

The Gauss map $G$ of a submanifold $M$ into $G(n,m)$ in $\wedge ^{n}\mathbb{E%
}_{s}^{m},$ where $G(n,m)$ is the Grassmannian manifold consisting of all
oriented $n-$planes through the origin of $\mathbb{E}_{s}^{m}$ and $\wedge
^{n}\mathbb{E}_{s}^{m}$ is the vector space obtained by the exterior product
of $n$ vectors in $\mathbb{E}_{s}^{m}$ is a smooth map which carries a point 
$p$ in $M$ into the oriented $n-$plane in $\mathbb{E}_{s}^{m}$ obtained from
parallel translation of the tangent space of $M$ at $p$ in $\mathbb{E}%
_{s}^{m}.$ Since the vector space $\wedge ^{n}\mathbb{E}_{s}^{m}$ identify
with a semi-Euclidean space $\mathbb{E}_{t}^{N}$ for some positive integer $%
t,$ where $N=\left( 
\begin{array}{c}
m \\ 
n%
\end{array}%
\right) ,$ the Gauss map is defined by $G:M\rightarrow G(n,m)\subset \mathbb{%
	E}_{t}^{N},$ $G(p)=\left( e_{1}\wedge ...\wedge e_{n}\right) \left( p\right) 
$. The notion of submanifolds with finite type Gauss map was introduced by
B. Y.Chen and P.Piccinni in 1987 \cite{chen} and after then many works were
done about this topic, especially 1-type Gauss map and 2- type Gauss map.

If a submanifold $M$ of a Euclidean space or pseudo-Euclidean space has
1-type Gauss map $G$, then $G$ satisfies 
\begin{equation*}
\Delta G=\lambda \left( G+C\right)
\end{equation*}
for some $\lambda \in \mathbb{R}$ and some constant vector $C.$

On the other hand the Laplacian of the Gauss map of some typical well-known
surfaces satisfy the form of%
\begin{equation}
\Delta G=f\left( G+C\right) 
\end{equation}%
for some smooth function $f$ on $M$ and some constant vector $C.$ A
submanifold of a Euclidean space or pseudo-Euclidean space is said to have
pointwise 1-type Gauss map, if its Gauss map satisfies (1) for some smooth
function $f$ on $M$ and some constant vector $C.$ If the vector $C$\ in (1)
is zero, a submanifold with pointwise 1-type Gauss map is said to be of the
first kind, otherwise it is said to be of the second kind.

A lot of papers were recently published about rotational surfaces with
pointwise 1-type Gauss map in four dimensional Euclidean and pseudo
Euclidean space in \cite{aksoyak 1},\cite{arslan 1},\cite{arslan 2}, \cite%
{dursun 1}, \cite{dursun 2} \cite{kim}.Timelike and spacelike rotational
surfaces of elliptic, hyperbolic and parabolic types in Minkowski space $%
\mathbb{E}_{1}^{4}$ with pointwise 1-type Gauss map were studied in \cite%
{bektas 1, bektas 2}. Aksoyak and Yayl\i \ in \cite{aksoyak 2} studied boost
invariant surfaces (rotational surfaces of hyperbolic type) with pointwise
1-type Gauss map in Minkowski space $\mathbb{E}_{1}^{4}$. They gave a
characterization for flat boost invariant surfaces with pointwise 1-type
Gauss map. Also they obtain some results for boost invariant marginally
trapped surfaces with pointwise 1-type Gauss map. Ganchev and Milousheva in 
\cite{milo} defined three types of rotational surfaces with two dimensional
axis rotational surfaces of elliptic, hyperbolic and parabolic type in
pseudo Euclidean space $\mathbb{E}_{2}^{4}$. They classify all rotational
marginally trapped surfaces of elliptic, hyperbolic and parabolic type,
respectively.

In this paper, we study rotational surfaces of elliptic, hyperbolic and
parabolic type with pointwise 1-type Gauss map which have spacelike profile
curve in four dimensional pseudo Euclidean space and give all
classifications of flat rotational surfaces of elliptic, hyperbolic and
parabolic type with pointwise 1-type Gauss map.

\section{Preliminaries}

Let $\mathbb{E}_{s}^{m}$ be the $m-$dimensional pseudo-Euclidean space with
signature $(s,m-s)$. Then the metric tensor $g$ in $\mathbb{E}_{s}^{m}$ has
the form 
\begin{equation*}
g=\sum \limits_{i=1}^{m-s}\left( dx_{i}\right) ^{2}-\sum
\limits_{i=m-s+1}^{m}\left( dx_{i}\right) ^{2}
\end{equation*}%
where $(x_{1},...,x_{m})$ is a standard rectangular coordinate system in $%
\mathbb{E}_{s}^{m}.$

A vector $v$ is called spacelike (resp., timelike) if $\left \langle
v,v\right \rangle >0$ (resp., $\left \langle v,v\right \rangle <0$). Avector 
$v$ is called lightlike if it $v\neq 0$ and $\left \langle v,v\right \rangle
=0,$ where $\left \langle ,\right \rangle $ is indefinite inner scalar
product with respect to $g.$

Let $M$ be an $n-$dimensional pseudo-Riemannian submanifold of a $m-$%
dimensional pseudo-Euclidean space $\mathbb{E}_{s}^{m}$ and denote by $%
\tilde{\nabla}$ and $\nabla $ Levi-Civita connections of $\mathbb{E}_{s}^{m}$
and $M$ $,$ respectively. We choose local orthonormal frame $\left \{
e_{1},...,e_{n},e_{n+1},...,e_{m}\right \} $ on $M$ with $\varepsilon
_{A}=\left \langle e_{A},e_{A}\right \rangle =\pm 1$ such that $e_{1},$...,$%
e_{n}$ are tangent to $M$\ and $e_{n+1},$...,$e_{m}$ are normal to $M.$ We
use the following convention on the ranges of indices: $1\leq i,j,k,$...$%
\leq n$, $n+1\leq r,s,t,$...$\leq m$, $1\leq A,B,C,$...$\leq m.$

Denote by $\omega _{A}$ the dual-1 form of $e_{A}$ such that $\omega
_{A}\left( X\right) =\left \langle e_{A},X\right \rangle $ and $\omega _{AB}$
the connection forms defined by%
\begin{equation*}
de_{A}=\sum \limits_{B}\varepsilon _{B}\omega _{AB}e_{B},\text{ \  \ }\omega
_{AB}+\omega _{BA}=0.
\end{equation*}%
Then the formulas of Gauss and Weingarten are given by%
\begin{equation*}
\tilde{\nabla}_{e_{k}}^{e_{i}}=\sum \limits_{j=1}^{n}\varepsilon _{j}\omega
_{ij}\left( e_{k}\right) e_{j}+\sum \limits_{r=n+1}^{m}\varepsilon
_{r}h_{ik}^{r}e_{r}
\end{equation*}%
and%
\begin{equation*}
\tilde{\nabla}_{e_{k}}^{e_{s}}=-\sum \limits_{j=1}^{n}\varepsilon
_{j}h_{kj}^{s}e_{j}+D_{e_{k}}^{e_{s}},\text{ \ }D_{e_{k}}^{e_{s}}=\sum%
\limits_{r=n+1}^{m}\varepsilon _{r}\omega _{sr}\left( e_{k}\right) e_{r},
\end{equation*}%
where $D$ is the normal connection, $h_{ik}^{r}$ the coefficients of the
second fundamental form $h.$

For any real function $f$ on $M,$ the Laplacian operator of $M$ with respect
to induced metric is given by 
\begin{equation}
\Delta f=-\varepsilon _{i}\sum \limits_{i}\left( \tilde{\nabla}_{e_{i}}%
\tilde{\nabla}_{e_{i}}f-\tilde{\nabla}_{\nabla _{e_{i}}^{e_{i}}}f\right) .
\end{equation}%
The mean curvature vector $H$ and the Gaussian curvature $K$of $M$ in $%
\mathbb{E}_{s}^{m}$ are defined by 
\begin{equation}
H=\frac{1}{n}\sum \limits_{s=n+1}^{m}\sum \limits_{i=1}^{n}\varepsilon
_{i}\varepsilon _{s}h_{ii}^{s}e_{s}
\end{equation}%
and%
\begin{equation}
K=\sum \limits_{s=n+1}^{m}\varepsilon _{s}\left(
h_{11}^{s}h_{22}^{s}-h_{12}^{s}h_{21}^{s}\right) ,
\end{equation}%
respectively. We recall that a surface $M$ is called minimal if its mean
curvature vector vanishes identically, i.e. $H=0.$ If the mean curvature
vector satisfies $DH=0,$ then the surface $M$ is said to have parallel mean
curvature vector. Also if Gaussian curvature of $M$ vanishes identically,
i.e. $K=0,$ the surface $M$ is called flat.

\section{Rotational Surfaces with Pointwise 1-Type Gauss Map in $\mathbb{E}%
	_{2}^{4}$}

In this section, we consider rotational surfaces of elliptic, hyperbolic and
parabolic type in four dimensional pseudo-Euclidean space $\mathbb{E}%
_{2}^{4} $ which are defined by Ganchev and Milousheva in \cite{milo} and
investigate these rotational surfaces with pointwise 1-type Gauss map.

Denote by $\left \{ \epsilon _{1},\epsilon _{2},\epsilon _{3},\epsilon
_{4}\right \} $ the standart orthonormal basis of $\mathbb{E}_{2}^{4},$
i.e., $\epsilon _{1}=(1,0,0,0),$ $\epsilon _{2}=(0,1,0,0),$ $\epsilon
_{3}=(0,0,1,0)$ and $\epsilon _{4}=(0,0,0,1),$ where $\left \langle \epsilon
_{1},\epsilon _{1}\right \rangle =\left \langle \epsilon _{2},\epsilon
_{2}\right \rangle =1,$ $\left \langle \epsilon _{3},\epsilon
_{3}\right
\rangle =\left \langle \epsilon _{4},\epsilon _{4}\right \rangle
=-1. $

\subsection{Rotational surfaces of elliptic type with pointwise 1-type Gauss
	map in $\mathbb{E}_{2}^{4}$}

In this subsection, firstly we consider the rotational surfaces of elliptic
type with harmonic Gauss map. Further we give a characterization of the flat
rotational surfaces of elliptic type with pointwise 1-type Gauss map and
obtain a relationship for non-minimal these surfaces with parallel mean
curvature vector and pointwise 1-type Gauss map of the first kind.

Rotational surface of elliptic type $M_{1}$ is defined by%
\begin{equation*}
\varphi \left( t,s\right) =%
\begin{pmatrix}
1 & 0 & 0 & 0 \\ 
0 & 1 & 0 & 0 \\ 
0 & 0 & \cos t & -\sin t \\ 
0 & 0 & \sin t & \cos t%
\end{pmatrix}%
\left( 
\begin{array}{c}
x_{1}(s) \\ 
x_{2}(s) \\ 
x_{3}(s) \\ 
0%
\end{array}%
\right)
\end{equation*}%
\begin{equation}
M_{1}:\text{ }\varphi \left( t,s\right) =\left(
x_{1}(s),x_{2}(s),x_{3}(s)\cos t,x_{3}(s)\sin t\right) ,
\end{equation}%
where the surface $M_{1}$ is obtained by the rotation of the curve $%
x(s)=(x_{1}(s),x_{2}(s),x_{3}(s),0)$ about the two dimensional Euclidean
plane span$\left \{ \epsilon _{1},\epsilon _{2}\right \} .$ Let the profile
curve of $M_{1}$ be unit speed spacelike curve. In that case $\left(
x_{1}{}^{\prime }(s)\right) ^{2}+\left( x_{2}{}^{\prime }(s)\right)
^{2}-\left( x_{3}{}^{\prime }(s)\right) ^{2}=1$. We suppose that $%
x_{3}(s)>0. $ The moving frame field $\left \{
e_{1},e_{2},e_{3},e_{4}\right
\} $ on $M_{1} $\ is determined as follows: 
\begin{eqnarray*}
	e_{1} &=&\left( x_{1}{}^{\prime }(s),x_{2}{}^{\prime }(s),x_{3}{}^{\prime
	}(s)\cos t,x_{3}{}^{\prime }(s)\sin t\right) , \\
	e_{2} &=&\left( 0,0,-\sin t,\cos t\right) , \\
	e_{3} &=&\frac{1}{\sqrt{1+\left( x_{3}{}^{\prime }\right) ^{2}}}\left(
	-x_{2}{}^{\prime }(s),x_{1}{}^{\prime }(s),0,0\right) , \\
	e_{4} &=&\frac{1}{\sqrt{1+\left( x_{3}{}^{\prime }\right) ^{2}}}\left(
	x_{3}{}^{\prime }(s)x_{1}{}^{\prime }(s),x_{3}{}^{\prime }(s)x_{2}{}^{\prime
	}(s),(1+\left( x_{3}{}^{\prime }\right) ^{2})\cos t,(1+\left(
	x_{3}{}^{\prime }\right) ^{2})\sin t\right) ,
\end{eqnarray*}%
where $e_{1},e_{2}$ and $e_{3},e_{4}$ are tangent vector fields and normal
vector fields to $M_{1},$ respectively.Then it is easily seen that 
\begin{equation*}
\left \langle e_{1},e_{1}\right \rangle =\left \langle e_{3},e_{3}\right
\rangle =1,\text{ }\left \langle e_{2},e_{2}\right \rangle =\left \langle
e_{4},e_{4}\right \rangle =-1.
\end{equation*}%
We have the dual 1-forms as: 
\begin{equation*}
\omega _{1}=ds\text{ \  \  \  \ and \  \  \  \ }\omega _{2}=-x_{3}(s)dt.
\end{equation*}%
After some computations, the components of the second fundamental form and
the connection forms are given as follows:%
\begin{eqnarray}
h_{11}^{3} &=&-d(s),\ h_{12}^{3}=0,\ h_{22}^{3}=0, \\
h_{11}^{4} &=&-c(s),\text{ \ }h_{12}^{4}=0,\text{ \ }h_{22}^{4}=b(s)  \notag
\end{eqnarray}%
and%
\begin{eqnarray*}
	\omega _{12} &=&a(s)b(s)\omega _{2},\text{ \  \ }\omega _{13}=-d(s)\omega
	_{1},\text{ \  \ }\omega _{14}=-c(s)\omega _{1}, \\
	\omega _{23} &=&0,\text{ \  \ }\omega _{24}=-b(s)\omega _{2},\text{ \  \ }%
	\omega _{34}=a(s)d(s)\omega _{1}.
\end{eqnarray*}
The covariant differentiations with respect to $e_{1}$ and $e_{2}$ are
computed as: 
\begin{eqnarray}
\tilde{\nabla}_{e_{1}}e_{1} &=&-d(s)e_{3}+c(s)e_{4}, \\
\tilde{\nabla}_{e_{2}}e_{1} &=&a(s)b(s)e_{2},  \notag \\
\tilde{\nabla}_{e_{1}}e_{2} &=&0,  \notag \\
\tilde{\nabla}_{e_{2}}e_{2} &=&a(s)b(s)e_{1}-b(s)e_{4},  \notag \\
\tilde{\nabla}_{e_{1}}e_{3} &=&d(s)e_{1}-a(s)d(s)e_{4},  \notag \\
\tilde{\nabla}_{e_{2}}e_{3} &=&0,  \notag \\
\tilde{\nabla}_{e_{1}}e_{4} &=&c(s)e_{1}-a(s)d(s)e_{3},  \notag \\
\tilde{\nabla}_{e_{2}}e_{4} &=&b(s)e_{2},  \notag
\end{eqnarray}%
where 
\begin{eqnarray}
a(s) &=&\frac{x_{3}^{\prime }(s)}{\sqrt{1+\left( x_{3}{}^{\prime }\right)
		^{2}}},\text{ } \\
b(s) &=&\frac{\sqrt{1+\left( x_{3}{}^{\prime }\right) ^{2}}}{x_{3}(s)},\text{
} \\
c(s) &=&\frac{x_{3}^{\prime \prime }(s)}{\sqrt{1+\left( x_{3}{}^{\prime
		}\right) ^{2}}}, \\
d(s) &=&\frac{x_{1}^{\prime \prime }(s)x_{2}^{\prime }(s)-x_{2}^{\prime
		\prime }(s)x_{1}^{\prime }(s)}{\sqrt{1+\left( x_{3}{}^{\prime }\right) ^{2}}}%
.
\end{eqnarray}%
By using (3), (4) and (6), the mean curvature vector and Gaussian curvature
of the surface $M_{1}$ are obtained as:%
\begin{equation}
H=\frac{1}{2}\left( -d(s)e_{3}+\left( c(s)+b\left( s\right) \right)
e_{4}\right)
\end{equation}%
and 
\begin{equation}
K=c(s)b\left( s\right) ,
\end{equation}%
respectively.

By using (2) and (7) the Laplacian of the Gauss map of $M_{1}$\ is computed
as:%
\begin{equation}
\Delta G=L(s)\left( e_{1}\wedge e_{2}\right) +M(s)\left( e_{2}\wedge
e_{3}\right) +N(s)\left( e_{2}\wedge e_{4}\right) ,
\end{equation}%
where 
\begin{equation}
L(s)=d^{2}(s)-b^{2}\left( s\right) -c^{2}\left( s\right) ,
\end{equation}%
\begin{equation}
M(s)=d^{\prime }\left( s\right) +a(s)d(s)(b(s)+c(s)),
\end{equation}%
\begin{equation}
N(s)=b^{\prime }(s)+c^{\prime }(s)+a(s)d^{2}(s).
\end{equation}

\begin{theorem}
	\label{teo 1}Let $M_{1}$ be rotation surface of elliptic type given by the
	parametrization (5). If $M_{1}$ has harmonic Gauss map then it has constant
	Gaussian curvature.
\end{theorem}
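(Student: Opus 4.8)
The plan is to read the harmonic condition directly off the expression (14) for $\Delta G$. The three bivectors $e_1\wedge e_2$, $e_2\wedge e_3$, $e_2\wedge e_4$ are distinct members of the orthonormal bivector basis of $\wedge^2\mathbb{E}_2^4$, hence linearly independent, so $\Delta G=0$ holds if and only if their three coefficients vanish simultaneously. By (15)--(17) this is equivalent to the system
\[
d^2 = b^2+c^2, \qquad d' = -ad(b+c), \qquad b'+c' = -ad^2 .
\]
Since $K=cb$ by (13), the theorem reduces to proving $K'=(bc)'=0$ under these three constraints.

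The key auxiliary ingredient I would establish first is a purely kinematic identity for $b'$ that does not use the harmonic hypothesis. Writing $w=\sqrt{1+(x_3')^2}$, the definitions (8)--(10) read $a=x_3'/w$, $c=x_3''/w$, $b=w/x_3$, and differentiating $w$ gives $w'=acw$. Substituting these into the derivative of $b=w/x_3$ yields $b'=ab(c-b)$. Feeding this into the third constraint $b'+c'=-ad^2$ and eliminating $d^2$ by the first constraint then produces a closed expression for $c'$, namely $c'=-ab(c-b)-a(b^2+c^2)=-ac(b+c)$.

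With $b'$ and $c'$ expressed entirely through $a,b,c$, a direct computation gives
\[
K' = b'c+bc' = ab(c-b)c - ac(b+c)b = -2ab^2c .
\]
It remains to show that this quantity vanishes, and here the second constraint enters. Differentiating the first constraint $d^2=b^2+c^2$ gives $dd'=bb'+cc'$; substituting $d'=-ad(b+c)$ together with $d^2=b^2+c^2$ turns the left-hand side into $-a(b^2+c^2)(b+c)$, so that $bb'+cc'=-a(b^2+c^2)(b+c)$. Expanding both sides with the formulas for $b'$ and $c'$ and cancelling equal terms leaves the single relation $2ab^2c=0$. Therefore $K'=-2ab^2c=0$, and $K$ is constant, as claimed.

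The step I expect to be the crux is recognising that the first two constraints are both needed and that they interlock: the $L=0$ and $N=0$ conditions by themselves only reduce $K'$ to $-2ab^2c$, and it is the compatibility of the \emph{differentiated} $L=0$ relation with the $M=0$ condition that forces $ab^2c=0$. Noticing that the direct expansion of $K'$ and the consistency check of $dd'=bb'+cc'$ both collapse to the same monomial $ab^2c$ (with opposite sign) is the essential observation; the remaining manipulations are routine differentiations of the definitions (8)--(11).
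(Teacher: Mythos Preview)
Your argument is correct. The kinematic identity $b'=ab(c-b)$ is verified easily from the definitions, your expression $c'=-ac(b+c)$ follows from it together with $L=N=0$, the computation $K'=-2ab^2c$ is right, and the compatibility check between the differentiated $L=0$ relation and the $M=0$ equation does indeed force $2ab^2c=0$.

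However, the paper reaches the same conclusion by a shorter and more symmetric route that avoids the kinematic identity entirely. Multiplying the second harmonic equation $d'+ad(b+c)=0$ by $d$ and replacing $ad^2$ via the third equation $b'+c'+ad^2=0$ gives
\[
dd' - (b'+c')(b+c)=0,\qquad\text{i.e.}\qquad dd'-bb'-cc'=(bc)'.
\]
Differentiating the first harmonic equation $d^2-b^2-c^2=0$ gives $dd'-bb'-cc'=0$, hence $(bc)'=0$ and $K=bc$ is constant. No explicit formula for $b'$ or $c'$ is needed, and the three constraints are used only once each. Your approach, by contrast, expends the $L=0$ and $N=0$ equations to isolate $b'$ and $c'$ separately, which is why you then have to reintroduce information by differentiating $L=0$ and invoking $M=0$ a second time. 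Both arguments are valid; the paper's is cleaner because it manipulates only the combination $dd'-bb'-cc'$ that is directly relevant to $K'$, while yours has the side benefit of making the individual derivatives $b',c'$ explicit, which could be useful if one wanted finer information about the surface.
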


\begin{proof}
	Let the Gauss map of $M_{1}$be harmonic , i.e., $\Delta G=0.$ In that case
	from (14), (15), (16) and (17) we have 
	\begin{eqnarray}
	d^{2}(s)-b^{2}\left( s\right) -c^{2}\left( s\right) &=&0, \\
	d^{\prime }\left( s\right) +a(s)d(s)(b(s)+c(s)) &=&0,  \notag \\
	b^{\prime }(s)+c^{\prime }(s)+a(s)d^{2}(s) &=&0.  \notag
	\end{eqnarray}%
	By multiplying both sides of second equation of (18) with $d(s)$ and using
	the third equation of (18) we have that%
	\begin{equation}
	d(s)d^{\prime }\left( s\right) -b(s)b^{\prime }\left( s\right)
	-c(s)c^{\prime }\left( s\right) =(b(s)c(s))^{\prime }.
	\end{equation}
	
	By evaluating the derivative of the first equation of (18) with respect to $%
	s $ and using (19), we have that $b(s)c(s)=$constant and from (13) it
	implies that $K=K_{0}=$constant.
\end{proof}

\begin{theorem}
	\label{teo 2}Let $M_{1}$ be the flat rotation surface of elliptic type given
	by the parameterization (5). Then $M_{1}$ has pointwise 1-type Gauss map if
	and only if the profile curve of $M_{1}$ is characterized in one of the
	following way:
	
	i)%
	\begin{eqnarray}
	x_{1}(s) &=&-\frac{1}{\delta _{1}}\sin \left( -\delta _{1}s+\delta
	_{2}\right) +\delta _{4}, \\
	x_{2}(s) &=&\frac{1}{\delta _{1}}\cos \left( -\delta _{1}s+\delta
	_{2}\right) +\delta _{4},  \notag \\
	x_{3}(s) &=&\delta _{3},  \notag
	\end{eqnarray}%
	where $\delta _{1},$\ $\delta _{2},$\ $\delta _{3}$\ and $\delta _{4}$ are
	real constants and the Gauss map of $M_{1}$\ holds (1) for $f=\delta
	_{1}^{2}-\frac{1}{\delta _{3}^{2}}$ and $C=0.$ If $\delta _{1}\delta
	_{3}=\pm 1$ then the function $f$ becomes zero and it implies that the Gauss
	map is harmonic.
	
	ii)%
	\begin{eqnarray}
	x_{1}(s) &=&\int \left( 1+\lambda _{1}^{2}\right) ^{\frac{1}{2}}\cos \left( -%
	\frac{\lambda _{3}}{\lambda _{1}\left( 1+\lambda _{1}^{2}\right) ^{\frac{1}{2%
	}}}\ln (\lambda _{1}s+\lambda _{2})+\lambda _{4}\right) ds,  \notag \\
	x_{2}(s) &=&\int \left( 1+\lambda _{1}^{2}\right) ^{\frac{1}{2}}\sin \left( -%
	\frac{\lambda _{3}}{\lambda _{1}\left( 1+\lambda _{1}^{2}\right) ^{\frac{1}{2%
	}}}\ln (\lambda _{1}s+\lambda _{2})+\lambda _{4}\right) ds,  \notag \\
	x_{3}(s) &=&\lambda _{1}s+\lambda _{2},
	\end{eqnarray}%
	where $\lambda _{1},$ $\lambda _{2},$ $\lambda _{3}$ and $\lambda _{4}$ are
	real constants and the Gauss map of $M_{1}$holds (1) for $f(s)=\frac{1}{%
		\left( \lambda _{1}s+\lambda _{2}\right) ^{2}}\left( \frac{\lambda _{3}^{2}}{%
		1+\lambda _{1}^{2}}-1\right) $ and $C=\lambda _{1}^{2}e_{1}\wedge
	e_{2}+\lambda _{1}\left( 1+\lambda _{1}^{2}\right) ^{\frac{1}{2}}e_{2}\wedge
	e_{4}.$
\end{theorem}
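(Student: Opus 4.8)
The plan is to use the two hypotheses in sequence: flatness nearly determines $x_3$, and the constancy of the vector $C$ in $(1)$, tested \emph{along the rotational direction} $e_2$, collapses the system $(15)$--$(17)$ to a single first-order ODE for the profile curve, which I then integrate. I will establish necessity first and indicate the (computational) converse at the end.

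Step one is flatness. By $(13)$, $K=c(s)b(s)$, and since $b(s)=\sqrt{1+(x_3')^2}/x_3(s)>0$ (as $x_3>0$), the vanishing of $K$ forces $c(s)=0$; by $(11)$ this means $x_3''(s)=0$, so $x_3(s)=\lambda_1 s+\lambda_2$ is affine. Accordingly $a(s)=x_3'/\sqrt{1+(x_3')^2}$ is a constant, and a short differentiation of $(10)$ yields the identity $b'(s)=-a\,b^2(s)$, which I expect to be the main algebraic simplification later. I split the argument into the case $\lambda_1=0$ (so $x_3\equiv\delta_3$ and $a\equiv0$) and the case $\lambda_1\neq0$.

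Step two imposes pointwise $1$-type. Setting $c=0$ in $(14)$ gives $\Delta G=L(e_1\wedge e_2)+M(e_2\wedge e_3)+N(e_2\wedge e_4)$, so $(1)$ becomes $fC=(L-f)(e_1\wedge e_2)+M(e_2\wedge e_3)+N(e_2\wedge e_4)$; in particular $C$ lies in the span of $e_1\wedge e_2,\ e_2\wedge e_3,\ e_2\wedge e_4$. The decisive step is to differentiate $fC$ along $e_2$ and use $\tilde{\nabla}_{e_2}C=0$. Expanding with the Gauss--Weingarten data $(7)$, the term $\tilde{\nabla}_{e_2}(e_2\wedge e_3)$ is the only one producing a component along $e_3\wedge e_4$, with coefficient a nonzero multiple of $M\,b$; since $C$ has no $e_3\wedge e_4$-component, this forces $M(s)=0$. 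The $e_1\wedge e_4$-component of the same identity then gives $f=L-Na$, and combined with $M=0$ it also shows $f$ depends on $s$ alone. Using $b'=-ab^2$ in $(17)$ now collapses everything: $N=b'+ad^2=a(d^2-b^2)=aL$ and hence $f=(1-a^2)L$, and one checks directly that $\tilde{\nabla}_{e_1}(fC)=f'C$ is then an identity, so that $C=\lambda_1^2(e_1\wedge e_2)+\lambda_1\sqrt{1+\lambda_1^2}(e_2\wedge e_4)$ is genuinely constant.

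Step three integrates the surviving equation $M=0$, i.e. $d'(s)+a(s)b(s)\,d(s)=0$. Since $a(s)b(s)=\lambda_1/(\lambda_1 s+\lambda_2)$, this linear equation integrates to $d(s)=\lambda_3/(\lambda_1 s+\lambda_2)$ (a constant $\delta_1$ when $\lambda_1=0$). Writing the unit-speed spacelike condition as $x_1'=\sqrt{1+\lambda_1^2}\cos\theta$, $x_2'=\sqrt{1+\lambda_1^2}\sin\theta$ turns $(12)$ into $d=-\sqrt{1+\lambda_1^2}\,\theta'$, a quadrature giving the logarithmic phase of case (ii) (resp. the linear phase, i.e. a circle, of case (i)); one further integration of $x_1',x_2'$ produces the stated formulas, and substitution returns the claimed $f$ and $C$, with the harmonic subcase $\delta_1\delta_3=\pm1$ appearing exactly when $f\equiv0$. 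The converse is the reverse computation: starting from the displayed profile curves one recomputes $a,b,c,d$, verifies $M=0$, $N=aL$, $f=(1-a^2)L$, and checks $\tilde{\nabla}C=0$ to confirm $(1)$. The step I expect to be most delicate is the $e_2$-derivative bookkeeping --- correctly identifying that $e_3\wedge e_4$ is the unique component isolating $M$, and verifying that the $e_1$-direction constancy of $C$ is not an extra constraint but a consequence of $b'=-ab^2$.
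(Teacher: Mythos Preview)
Your proof is correct and follows essentially the same route as the paper's: both arguments exploit the constancy of $C$ by differentiating along the rotational direction $e_2$ to force $M(s)=0$ and $f=L-aN$, then integrate the resulting first-order ODE $d'+abd=0$ for the profile curve. The only organizational difference is that you impose flatness at the outset (obtaining $c=0$ and the identity $b'=-ab^2$ before touching the pointwise $1$-type condition), whereas the paper first extracts $(24)$ and then splits on whether $a$ vanishes; your additional observation $N=aL$, hence $f=(1-a^{2})L$, and your explicit check that the $e_1$-direction imposes no further constraint, make the argument slightly more transparent but do not change its substance.
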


\begin{proof}
	We suppose that $M_{1}$ has pointwise 1-type Gauss map. By using (1) and
	(14), we get 
	\begin{eqnarray}
	-f+f\left \langle C,e_{1}\wedge e_{2}\right \rangle  &=&-L(s), \\
	f\left \langle C,e_{2}\wedge e_{3}\right \rangle  &=&-M(s),  \notag \\
	f\left \langle C,e_{2}\wedge e_{4}\right \rangle  &=&N(s)  \notag
	\end{eqnarray}%
	and 
	\begin{equation}
	\left \langle C,e_{1}\wedge e_{3}\right \rangle =\left \langle C,e_{1}\wedge
	e_{4}\right \rangle =\left \langle C,e_{3}\wedge e_{4}\right \rangle =0.
	\end{equation}%
	By taking the derivatives of all equations in (23) with respect to $e_{2}$
	and using (22) we obtain 
	\begin{eqnarray}
	a(s)N(s)-L(s)+f &=&0, \\
	a(s)M(s) &=&0,  \notag \\
	M(s) &=&0,  \notag
	\end{eqnarray}%
	respectively. From above equations, we have two cases. One of them is $a(s)=0
	$, $M(s)=0$ and the other is $a(s)\neq 0,$ $M(s)=0.$ Firstly, we suppose
	that $a(s)=0$ and $M(s)=0.$ By using (8), we have that $x_{3}(s)=\delta _{3}$%
	=constant. It implies that $c(s)=0,$ $b\left( s\right) =\frac{1}{\delta _{3}}
	$ and $M_{1}$ is flat. Since the profile curve $x$\ is spacelike curve which
	is parameterized by arc-length, we can put%
	\begin{equation}
	x_{1}^{\prime }(s)=\cos \delta \left( s\right) \text{ (or resp. }\sin \delta
	\left( s\right) \text{ )and }x_{2}^{\prime }(s)=\sin \delta \left( s\right) 
	\text{ (or resp. }\cos \delta \left( s\right) \text{)},
	\end{equation}%
	where $\delta $ is smooth angle function. Without loss of generality we
	assume that 
	\begin{equation*}
	x_{1}^{\prime }(s)=\cos \delta \left( s\right) \text{ and }x_{2}^{\prime
	}(s)=\sin \delta \left( s\right) 
	\end{equation*}
	We can do similar computations for the another case, too. By using third
	equation of (24)\ and (16) we obtain that 
	\begin{equation}
	d\left( s\right) =\delta _{1},\text{\ }\delta _{1}\text{\ is non zero
		constant.}
	\end{equation}%
	On the other hand by using (11), (25) and (26) we get%
	\begin{equation}
	\delta \left( s\right) =-\delta _{1}s+\delta _{2},
	\end{equation}%
	where $\delta _{1},$ $\delta _{2}$ are real constants. Then by substituting
	(27) into (25) and taking the integral we have the equation (20). Also the
	Laplacian of the Gauss map of $M_{1}$ with the equations $a(s)=0,$ $b\left(
	s\right) =\frac{1}{\delta _{3}}$, $c(s)=0$ and $d\left( s\right) =\delta _{1}
	$ is found as $\Delta G=\left( \delta _{1}^{2}-\frac{1}{\delta _{3}^{2}}%
	\right) G$
	
	Now we suppose that $a(s)\neq 0$ and $M(s)=0.$ Since the surface $M_{1}$ is
	flat, i.e., $K=0.$ By using (13) we have that $c(s)=0.$ From (10) we get 
	\begin{equation}
	x_{3}(s)=\lambda _{1}s+\lambda _{2}
	\end{equation}%
	for some constants $\lambda _{1}\neq 0$ and $\lambda _{2}.$ In that case by
	using (8), (9) and (28) we have 
	\begin{equation}
	a(s)=\frac{\lambda _{1}}{\left( 1+\lambda _{1}^{2}\right) ^{\frac{1}{2}}}
	\end{equation}%
	and 
	\begin{equation}
	b(s)=\frac{\left( 1+\lambda _{1}^{2}\right) ^{\frac{1}{2}}}{\lambda
		_{1}s+\lambda _{2}}.
	\end{equation}%
	Let consider that $M(s)=0$ with $c(s)=0$. In that case from (16), we obtain
	that%
	\begin{equation}
	d^{\prime }\left( s\right) +a(s)b(s)d(s)=0
	\end{equation}%
	By using (29), (30) and (31) we have 
	\begin{equation}
	d(s)=\frac{\lambda _{3}}{\lambda _{1}s+\lambda _{2}},
	\end{equation}%
	where $\lambda _{3}$\ is constant of integration. On the other hand, Since
	the profile curve $x$\ is spacelike curve which is parameterized by
	arc-length, we can put%
	\begin{eqnarray}
	x_{1}^{\prime }(s) &=&\left( 1+\lambda _{1}^{2}\right) ^{\frac{1}{2}}\cos
	\lambda \left( s\right) , \\
	x_{2}^{\prime }(s) &=&\left( 1+\lambda _{1}^{2}\right) ^{\frac{1}{2}}\sin
	\lambda \left( s\right) ,  \notag
	\end{eqnarray}%
	where $\lambda $ is smooth angle function. By differentiating (33) we obtain 
	\begin{eqnarray}
	x_{1}^{\prime \prime }(s) &=&-\left( 1+\lambda _{1}^{2}\right) ^{\frac{1}{2}%
	}\sin \lambda \left( s\right) \lambda ^{\prime }\left( s\right) , \\
	x_{2}^{\prime \prime }(s) &=&\left( 1+\lambda _{1}^{2}\right) ^{\frac{1}{2}%
	}\cos \lambda \left( s\right) \lambda ^{\prime }\left( s\right) .  \notag
	\end{eqnarray}%
	By using (11), (28), (33) and (34)$,$ we get%
	\begin{equation}
	d(s)=-\left( 1+\lambda _{1}^{2}\right) ^{\frac{1}{2}}\lambda ^{\prime
	}\left( s\right) .
	\end{equation}%
	By combining (32) and (35) we obtain 
	\begin{equation}
	\lambda \left( s\right) =-\frac{\lambda _{3}}{\lambda _{1}\left( 1+\lambda
		_{1}^{2}\right) ^{\frac{1}{2}}}\ln (\lambda _{1}s+\lambda _{2})+\lambda _{4}.
	\end{equation}%
	So by substituting (36) into (33), we get 
	\begin{eqnarray}
	x_{1}(s) &=&\int \left( 1+\lambda _{1}^{2}\right) ^{\frac{1}{2}}\cos \left( -%
	\frac{\lambda _{3}}{\lambda _{1}\left( 1+\lambda _{1}^{2}\right) ^{\frac{1}{2%
	}}}\ln (\lambda _{1}s+\lambda _{2})+\lambda _{4}\right) ds,  \notag \\
	x_{2}(s) &=&\int \left( 1+\lambda _{1}^{2}\right) ^{\frac{1}{2}}\sin \left( -%
	\frac{\lambda _{3}}{\lambda _{1}\left( 1+\lambda _{1}^{2}\right) ^{\frac{1}{2%
	}}}\ln (\lambda _{1}s+\lambda _{2})+\lambda _{4}\right) ds,  \notag
	\end{eqnarray}
	
	Conversely, the surface $M_{1}$ whose the profil curve given by (21) is
	pointwise 1-type Gauss map for%
	\begin{equation*}
	f(s)=\frac{1}{\left( \lambda _{1}s+\lambda _{2}\right) ^{2}}\left( \frac{%
		\lambda _{3}^{2}}{1+\lambda _{1}^{2}}-1\right)
	\end{equation*}%
	and 
	\begin{equation*}
	C=\lambda _{1}^{2}e_{1}\wedge e_{2}+\lambda _{1}\left( 1+\lambda
	_{1}^{2}\right) ^{\frac{1}{2}}e_{2}\wedge e_{4}.
	\end{equation*}
\end{proof}

\begin{theorem}
	\label{teo 3} A non- minimal rotational surfaces of elliptic type $M_{1}$
	defined by (5) has pointwise 1-type Gauss map of the first kind if and only
	if the mean curvature vector of $M_{1}$ is parallel .
\end{theorem}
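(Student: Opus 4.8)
The plan is to show that \emph{both} properties in the statement are equivalent to the single pair of conditions $M(s)\equiv 0$ and $N(s)\equiv 0$, where $M$ and $N$ are the functions in (16) and (17); the theorem then follows at once.

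First I would dispose of the pointwise 1-type side. Since the Gauss map of a surface is $G=e_{1}\wedge e_{2}$, being of pointwise 1-type of the first kind means $\Delta G=f\,G=f\,(e_{1}\wedge e_{2})$ for some smooth function $f$, i.e. (1) with $C=0$. Comparing this with the expression (14) for $\Delta G$ and using that the bivectors $e_{A}\wedge e_{B}$ are linearly independent, the identity holds precisely when $M(s)=0$ and $N(s)=0$, with $f=L(s)$. Hence $M_{1}$ has pointwise 1-type Gauss map of the first kind if and only if $M\equiv N\equiv 0$.

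Next I would compute the covariant derivative $DH$ of the mean curvature vector (12) in the normal bundle. Because all coefficient functions $a,b,c,d$ depend only on $s$ and $e_{2}$ is the rotational direction, one gets $D_{e_{2}}H=0$ automatically. For $D_{e_{1}}H$ I would extract the normal parts of $\tilde\nabla_{e_{1}}e_{3}$ and $\tilde\nabla_{e_{1}}e_{4}$ from (7), namely $D_{e_{1}}e_{3}=-a(s)d(s)\,e_{4}$ and $D_{e_{1}}e_{4}=-a(s)d(s)\,e_{3}$, and differentiate $H=\tfrac12\!\left(-d\,e_{3}+(c+b)\,e_{4}\right)$ along $e_{1}$. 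A short computation collapses the coefficients of $e_{3}$ and $e_{4}$ into exactly $-M$ and $N$, giving $D_{e_{1}}H=\tfrac12\!\left(-M(s)\,e_{3}+N(s)\,e_{4}\right)$. Therefore $DH=0$, i.e. $M_{1}$ has parallel mean curvature vector, if and only if $M\equiv N\equiv 0$.

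Combining the two reductions gives the claimed equivalence. The role of the non-minimality hypothesis is to exclude the degenerate case $H=0$: there $d=0$ and $c+b=0$ force $M\equiv N\equiv 0$ identically, so both properties hold trivially and the statement loses content. I expect the only delicate point to be the sign bookkeeping with $\varepsilon_{3}=1$, $\varepsilon_{4}=-1$ and the correct isolation of the normal components when forming $D_{e_{1}}H$; once the identity $D_{e_{1}}H=\tfrac12\!\left(-M\,e_{3}+N\,e_{4}\right)$ is in hand, everything else is immediate.
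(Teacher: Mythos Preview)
Your proposal is correct and follows essentially the same approach as the paper: both reduce each side of the equivalence to the pair of conditions $M(s)\equiv 0$, $N(s)\equiv 0$, using (14) for the Gauss-map side and the computation $D_{e_{1}}H=\tfrac12(-M\,e_{3}+N\,e_{4})$ for the mean-curvature side. Your argument is slightly more complete than the paper's, since you also verify $D_{e_{2}}H=0$ and comment on the role of non-minimality, but the strategy is the same.
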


\begin{proof}
	From (12) we have that $H=\frac{1}{2}\left( -d(s)e_{3}+\left( c(s)+b\left(
	s\right) \right) e_{4}\right) .$ Let the mean curvature vector of $M_{1}$ be
	parallel , i.e., $DH=0.$ Then we get 
	\begin{equation*}
	D_{e_{1}}H=\frac{1}{2}\left( -M(s)e_{3}+N(s)e_{4}\right) =0.
	\end{equation*}%
	In this case we obtain that $M(s)=N(s)=0.$ From (14), we have that $\Delta
	G=L(s)e_{1}\wedge e_{2}.$
	
	Conversely, if $M_{1}$ has pointwise 1-type Gauss map of the first kind then
	from (14) we get $M(s)=N(s)=0$ and it implies that $M_{1}$ has parallel mean
	curvature vector.
\end{proof}

\begin{corollary}
	\label{cor 1}If rotational surfaces of elliptic type $M_{1}$ given by (5) is
	minimal then it has pointwise 1-type Gauss map of the first kind.
\end{corollary}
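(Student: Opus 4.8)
The plan is to translate the minimality hypothesis into pointwise conditions on the invariants $b,c,d$ and then substitute these directly into the Laplacian formula (14). First I would start from the expression (12) for the mean curvature vector, namely $H=\frac{1}{2}\left(-d(s)e_{3}+(c(s)+b(s))e_{4}\right)$. Since $e_{3}$ and $e_{4}$ are linearly independent normal fields, the minimality condition $H=0$ is equivalent to the two scalar equations $d(s)=0$ and $c(s)+b(s)=0$; in particular $c(s)=-b(s)$ for all $s$.

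Next I would feed these two identities into the coefficients (15)--(17) of $\Delta G$. For $M(s)$ in (16), the term $d'(s)$ vanishes because $d\equiv 0$, and the term $a(s)d(s)(b(s)+c(s))$ vanishes because it carries the factor $d(s)=0$; hence $M(s)=0$. For $N(s)$ in (17), the identity $c=-b$ gives $c'(s)=-b'(s)$, so $b'(s)+c'(s)=0$, while $a(s)d^{2}(s)=0$ since $d\equiv 0$; hence $N(s)=0$ as well. As a byproduct one gets $L(s)=d^{2}(s)-b^{2}(s)-c^{2}(s)=-2b^{2}(s)$, but its exact value is not needed for the conclusion.

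Finally, with $M(s)=N(s)=0$, formula (14) collapses to $\Delta G=L(s)\left(e_{1}\wedge e_{2}\right)$. Since the Gauss map of $M_{1}$ is $G=e_{1}\wedge e_{2}$, this reads $\Delta G=f\left(G+C\right)$ with $f=L(s)$ and $C=0$, which by the definition in the introduction means precisely that $M_{1}$ has pointwise 1-type Gauss map of the first kind.

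There is no genuine obstacle here: the statement follows by a one-line substitution once the minimality condition is written out, and the only point requiring a moment's care is recognizing that killing the $e_{2}\wedge e_{3}$ and $e_{2}\wedge e_{4}$ components of $\Delta G$ leaves a scalar multiple of the Gauss map $e_{1}\wedge e_{2}$ itself, which is exactly what forces the constant vector $C$ to vanish (the ``first kind''). One could alternatively attempt to quote Theorem \ref{teo 3}, but since that theorem is stated for \emph{non-minimal} surfaces, the clean route is the direct computation above.
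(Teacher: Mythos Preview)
Your argument is correct and is precisely the computation the paper has in mind: the corollary is stated without proof right after Theorem~\ref{teo 3}, and the intended reasoning is that $H=0$ forces $M(s)=N(s)=0$ (either via $D_{e_1}H=\tfrac12(-M e_3+N e_4)=0$ as in the proof of Theorem~\ref{teo 3}, or equivalently by your direct substitution of $d=0$, $b+c=0$ into (16)--(17)), whence (14) reduces to $\Delta G=L(s)\,e_1\wedge e_2=fG$ with $C=0$. Your observation that Theorem~\ref{teo 3} cannot be quoted verbatim because of its non-minimality hypothesis is well taken, and the direct route you chose is the clean one.
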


\subsection{Rotational surfaces of hyperbolic type with pointwise 1-type
	Gauss map in $\mathbb{E}_{2}^{4}$}

In this subsection, firstly we consider rotational surfaces of hyperbolic
type with harmonic Gauss map. Further we obtain a characterization of flat
rotational surfaces of hyperbolic type with pointwise 1-type Gauss map and
give a relationship for non-minimal these surfaces with parallel mean
curvature vector and pointwise 1-type Gauss map of the first kind. The
proofs of theorems in this subsection are similar the proofs of theorems in
previous section so we give the theorems as without proof.

Rotational surface of hyperbolic type $M_{2}$ is defined by%
\begin{equation*}
\varphi \left( t,s\right) =%
\begin{pmatrix}
\cosh t & 0 & \sinh t & 0 \\ 
0 & 1 & 0 & 0 \\ 
\sinh t & 0 & \cosh t & 0 \\ 
0 & 0 & 0 & 1%
\end{pmatrix}%
\left( 
\begin{array}{c}
x_{1}(s) \\ 
x_{2}(s) \\ 
0 \\ 
x_{4}(s)%
\end{array}%
\right)
\end{equation*}%
\begin{equation}
M_{2}:\text{ }\varphi \left( t,s\right) =\left( x_{1}(s)\cosh
t,x_{2}(s),x_{1}(s)\sinh t,x_{4}(s)\right) ,
\end{equation}%
where the surface $M_{2}$ is obtained by the rotation of the curve $%
x(s)=(x_{1}(s),x_{2}(s),0,x_{4}(s))$ about the two dimensional Euclidean
plane span$\left \{ \epsilon _{2},\epsilon _{4}\right \} .$ Let the profile
curve of $M_{2}$ be unit speed spacelike curve. In that case $\left(
x_{1}{}^{\prime }(s)\right) ^{2}+\left( x_{2}{}^{\prime }(s)\right)
^{2}-\left( x_{4}{}^{\prime }(s)\right) ^{2}=1$. We assume that $x_{1}(s)>0.$
The moving frame field $\left \{ e_{1},e_{2},e_{3},e_{4}\right \} $ on $%
M_{2} $\ is choosen as follows:%
\begin{eqnarray*}
	e_{1} &=&\left( x_{1}{}^{\prime }(s)\cosh t,x_{2}{}^{\prime
	}(s),x_{1}{}^{\prime }(s)\sinh t,x_{4}{}^{\prime }(s)\right) , \\
	e_{2} &=&\left( \sinh t,0,\cosh t,0\right) , \\
	e_{3} &=&\frac{1}{\sqrt{\varepsilon \left( \left( x_{1}{}^{\prime }\right)
			^{2}-1\right) }}\left( 0,x_{4}{}^{\prime }(s),0,x_{2}{}^{\prime }(s)\right) ,
	\\
	e_{4} &=&\frac{1}{\sqrt{\varepsilon \left( \left( x_{1}{}^{\prime }\right)
			^{2}-1\right) }}\left( \left( 1-\left( x_{1}{}^{\prime }\right) ^{2}\right)
	\cosh t,-x_{1}{}^{\prime }(s)x_{2}{}^{\prime }(s),\left( 1-\left(
	x_{1}{}^{\prime }\right) ^{2}\right) \sinh t,-x_{1}{}^{\prime
	}(s)x_{4}{}^{\prime }(s)\right) ,
\end{eqnarray*}%
where $e_{1},e_{2}$ and $e_{3},e_{4}$ are tangent vector fields and normal
vector fields to $M_{2},$ respectively and $\varepsilon $\ is signature of $%
\left( x_{1}{}^{\prime }\right) ^{2}-1.$ If $\left( x_{1}{}^{\prime }\right)
^{2}-1$ is positive (resp. negative) then $\varepsilon =1$ (resp. $%
\varepsilon =-1$). It is easily seen that 
\begin{equation*}
\left \langle e_{1},e_{1}\right \rangle =-\left \langle e_{2},e_{2}\right
\rangle =1,\text{ }\left \langle e_{3},e_{3}\right \rangle =-\left \langle
e_{4},e_{4}\right \rangle =\varepsilon .
\end{equation*}%
we have the dual 1-forms as: 
\begin{equation*}
\omega _{1}=ds\text{ \  \  \  \ and \  \  \  \ }\omega _{2}=-x_{1}(s)dt.
\end{equation*}%
After some computations, components of the second fundamental form and the
connection forms are obtained by:%
\begin{eqnarray}
h_{11}^{3} &=&d(s),\ h_{12}^{3}=0,\ h_{22}^{3}=0, \\
h_{11}^{4} &=&c(s),\text{ \ }h_{12}^{4}=0,\text{ \ }h_{22}^{4}=-\varepsilon
b(s)  \notag
\end{eqnarray}%
and%
\begin{eqnarray*}
	\omega _{12} &=&a(s)b(s)\omega _{2},\text{ \  \ }\omega _{13}=d(s)\omega _{1},%
	\text{ \  \ }\omega _{14}=c(s)\omega _{1}, \\
	\omega _{23} &=&0,\text{ \  \ }\omega _{24}=\varepsilon b(s)\omega _{2},\text{
		\  \ }\omega _{34}=a(s)d(s)\omega _{1}.
\end{eqnarray*}%
The covariant differentiations with respect to $e_{1}$ and $e_{2}$ are
computed as: 
\begin{eqnarray}
\tilde{\nabla}_{e_{1}}e_{1} &=&\varepsilon d(s)e_{3}-\varepsilon c(s)e_{4} \\
\tilde{\nabla}_{e_{2}}e_{1} &=&a(s)b(s)e_{2}  \notag \\
\tilde{\nabla}_{e_{1}}e_{2} &=&0  \notag \\
\tilde{\nabla}_{e_{2}}e_{2} &=&a(s)b(s)e_{1}+b(s)e_{4}  \notag \\
\tilde{\nabla}_{e_{1}}e_{3} &=&-d(s)e_{1}-\varepsilon a(s)d(s)e_{4}  \notag
\\
\tilde{\nabla}_{e_{2}}e_{3} &=&0  \notag \\
\tilde{\nabla}_{e_{1}}e_{4} &=&-c(s)e_{1}-\varepsilon a(s)d(s)e_{3}  \notag
\\
\tilde{\nabla}_{e_{2}}e_{4} &=&-\varepsilon b(s)e_{2}  \notag
\end{eqnarray}%
where 
\begin{equation*}
a(s)=\frac{x_{1}^{\prime }(s)}{\sqrt{\varepsilon \left( \left(
		x_{1}{}^{\prime }\right) ^{2}-1\right) }},
\end{equation*}%
\begin{equation*}
b(s)=\frac{\sqrt{\varepsilon \left( \left( x_{1}{}^{\prime }\right)
		^{2}-1\right) }}{x_{1}(s)},
\end{equation*}%
\begin{equation*}
c(s)=\frac{x_{1}^{\prime \prime }(s)}{\sqrt{\varepsilon \left( \left(
		x_{1}{}^{\prime }\right) ^{2}-1\right) }},
\end{equation*}%
\begin{equation*}
d(s)=\frac{x_{2}^{\prime \prime }(s)x_{4}^{\prime }(s)-x_{4}^{\prime \prime
	}(s)x_{2}^{\prime }(s)}{\sqrt{\varepsilon \left( \left( x_{1}{}^{\prime
		}\right) ^{2}-1\right) }}.
\end{equation*}%
By using (3), (4) and (38), the mean curvature vector and Gaussian curvature
of the surface $M_{2}$ are obtained as follows:%
\begin{equation*}
H=\frac{1}{2}\left( \varepsilon d(s)e_{3}-\varepsilon \left(
c(s)+\varepsilon b\left( s\right) \right) e_{4}\right)
\end{equation*}%
and 
\begin{equation*}
K=c(s)b\left( s\right) ,
\end{equation*}%
respectively.

By using (2) and (39) the Laplacian of the Gauss map of $M_{2}$\ is computed
as:%
\begin{equation*}
\Delta G=L(s)\left( e_{1}\wedge e_{2}\right) +M(s)\left( e_{2}\wedge
e_{3}\right) +N(s)\left( e_{2}\wedge e_{4}\right) ,
\end{equation*}%
where 
\begin{equation*}
L(s)=\varepsilon \left( d^{2}(s)-c^{2}\left( s\right) -b^{2}\left( s\right)
\right) ,
\end{equation*}%
\begin{equation*}
M(s)=\varepsilon \left( d^{\prime }\left( s\right) +\varepsilon
a(s)d(s)\left( c(s)+\varepsilon b\left( s\right) \right) \right) ,
\end{equation*}%
\begin{equation*}
N(s)=-\varepsilon \left( c^{\prime }(s)+\varepsilon b^{\prime
}(s)+\varepsilon a(s)d^{2}(s)\right) .
\end{equation*}

\begin{theorem}
	\label{teo 4}Let $M_{2}$ be rotation surface of hyperbolic type given by the
	parameterization (37). If $M_{2}$\ has Gauss map harmonic then it has
	constant Gaussian curvatrure.
\end{theorem}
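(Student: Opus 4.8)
The plan is to follow the template of the proof of Theorem \ref{teo 1} for the elliptic case, adapting the bookkeeping to the indefinite signature encoded by $\varepsilon$. First I would impose harmonicity $\Delta G=0$ on the expression $\Delta G=L(s)\left(e_{1}\wedge e_{2}\right)+M(s)\left(e_{2}\wedge e_{3}\right)+N(s)\left(e_{2}\wedge e_{4}\right)$. Since $\{e_{1}\wedge e_{2},\,e_{2}\wedge e_{3},\,e_{2}\wedge e_{4}\}$ are linearly independent in $\wedge^{2}\mathbb{E}_{2}^{4}$, this forces $L(s)=M(s)=N(s)=0$. Because $\varepsilon=\pm1\neq0$, I can cancel the global factor $\varepsilon$ in each of $L,M,N$ and record the three scalar equations $d^{2}-c^{2}-b^{2}=0$, $\ d'+\varepsilon ad(c+\varepsilon b)=0$, and $\ c'+\varepsilon b'+\varepsilon ad^{2}=0$.

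Next I would produce the analogue of identity (19). Multiplying the $M$-equation by $d$ gives $dd'=-\varepsilon ad^{2}(c+\varepsilon b)$, while the $N$-equation gives $\varepsilon ad^{2}=-(c'+\varepsilon b')$. Substituting the latter into the former and expanding, using $\varepsilon^{2}=1$, I expect to reach $dd'-bb'-cc'=\varepsilon(bc)'$, the hyperbolic counterpart of (19) now carrying an extra factor $\varepsilon$.

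Finally, differentiating the first equation $d^{2}-c^{2}-b^{2}=0$ with respect to $s$ yields $dd'-bb'-cc'=0$. Comparing this with the identity above gives $\varepsilon(bc)'=0$, hence $(bc)'=0$ since $\varepsilon\neq0$, so $bc$ is constant. Because $K=cb$, this shows $K=K_{0}=$ constant, which completes the proof.

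The only genuine obstacle here is the sign/signature bookkeeping: the overall factors $\varepsilon$ in $L,M,N$ together with the internal $\varepsilon$ factors appearing in $M$ and $N$ mean that the intermediate identity acquires an $\varepsilon$ absent in the elliptic computation. One must track these factors consistently (repeatedly invoking $\varepsilon^{2}=1$) to verify that they cancel cleanly at the last step; conceptually, however, the argument is identical to that of Theorem \ref{teo 1}, which is why the authors can safely state it without a separate proof.
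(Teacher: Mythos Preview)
Your proposal is correct and is exactly the argument the paper intends: since the authors explicitly omit the proof as being ``similar to the proofs of theorems in the previous section,'' the adaptation of the proof of Theorem~\ref{teo 1} with the $\varepsilon$-bookkeeping you describe is precisely what they have in mind. The intermediate identity $dd'-bb'-cc'=\varepsilon(bc)'$ and its comparison with the derivative of $d^{2}-c^{2}-b^{2}=0$ are both verified correctly.
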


\begin{theorem}
	\label{teo 5}Let $M_{2}$ be flat rotation surface of hyperbolic type given
	by the parameterization (37). Then $M_{2}$ has pointwise 1-type Gauss map if
	and only if the profile curve of $M_{2}$ is characterized in one of the
	following way:
	
	i)%
	\begin{eqnarray*}
		x_{1}(s) &=&\delta _{1}, \\
		x_{2}(s) &=&-\frac{1}{\delta _{2}}\sinh \left( -\delta _{2}s+\delta
		_{3}\right) +\delta _{4}, \\
		x_{4}(s) &=&-\frac{1}{\delta _{2}}\cosh \left( -\delta _{2}s+\delta
		_{3}\right) +\delta _{4},
	\end{eqnarray*}
	
	where $\delta _{1},$\ $\delta _{2},$\ $\delta _{3}$\ and $\delta _{4}$ are
	real constants and the Gauss map $G$ holds (1) for $f=\frac{1}{\delta
		_{1}^{2}}-\delta _{2}^{2}$ and $C=0.$ If $\delta _{1}\delta _{2}=\pm 1$ then
	the function $f$ becomes zero and it implies that the Gauss map is harmonic.
	
	ii)%
	\begin{eqnarray*}
		x_{1}(s) &=&\lambda _{1}s+\lambda _{2}, \\
		x_{2}(s) &=&\int \left( \lambda _{1}^{2}-1\right) ^{\frac{1}{2}}\sinh \left( 
		\frac{\lambda _{3}}{\lambda _{1}\left( \lambda _{1}^{2}-1\right) ^{\frac{1}{2%
		}}}\ln (\lambda _{1}s+\lambda _{2})+\lambda _{4}\right) ds, \\
		x_{4}(s) &=&\int \left( \lambda _{1}^{2}-1\right) ^{\frac{1}{2}}\cosh \left( 
		\frac{\lambda _{3}}{\lambda _{1}\left( \lambda _{1}^{2}-1\right) ^{\frac{1}{2%
		}}}\ln (\lambda _{1}s+\lambda _{2})+\lambda _{4}\right) ds,
	\end{eqnarray*}%
	where $\lambda _{1},$ $\lambda _{2},$ $\lambda _{3}$ and $\lambda _{4}$ are
	real constants and without loss of generality we suppose that $\lambda
	_{1}^{2}-1>0.$ Morever the Gauss map $G$ holds (1) for the function $f(s)=%
	\frac{1}{\left( \lambda _{1}s+\lambda _{2}\right) ^{2}}\left( 1-\frac{%
		\lambda _{3}^{2}}{\lambda _{1}^{2}-1}\right) $ and $C=-\lambda
	_{1}^{2}e_{1}\wedge e_{2}+\lambda _{1}\left( \lambda _{1}^{2}-1\right) ^{%
		\frac{1}{2}}e_{2}\wedge e_{4}.$
\end{theorem}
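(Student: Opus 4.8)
Because the paper notes that these proofs run parallel to the elliptic case, the plan is to transcribe the argument of Theorem~\ref{teo 2} while carrying the signature factor $\varepsilon$ through every step. First I would assume $M_{2}$ has pointwise 1-type Gauss map and expand $\Delta G=f(G+C)$ in the basis $\{e_{A}\wedge e_{B}\}$ of $\wedge^{2}\mathbb{E}_{2}^{4}$, using $G=e_{1}\wedge e_{2}$ and the indefinite induced product on $2$-vectors (for instance $\langle e_{1}\wedge e_{2},e_{1}\wedge e_{2}\rangle=\langle e_{1},e_{1}\rangle\langle e_{2},e_{2}\rangle=-1$). Comparing with the computed $\Delta G=L(e_{1}\wedge e_{2})+M(e_{2}\wedge e_{3})+N(e_{2}\wedge e_{4})$ gives the hyperbolic analogues of (22)--(23): three relations tying $\langle C,e_{1}\wedge e_{2}\rangle$, $\langle C,e_{2}\wedge e_{3}\rangle$, $\langle C,e_{2}\wedge e_{4}\rangle$ to $L,M,N$ through $f$, together with the vanishing of the remaining components $\langle C,e_{1}\wedge e_{3}\rangle=\langle C,e_{1}\wedge e_{4}\rangle=\langle C,e_{3}\wedge e_{4}\rangle=0$.

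The decisive step, exactly as in the elliptic case, is that $C$ is constant, so $\tilde{\nabla}_{e_{2}}C=0$. Differentiating the three active relations along $e_{2}$ and substituting the Gauss--Weingarten formulas (39) produces the analogue of (24), which forces $M(s)=0$ and separates two cases according to whether $a(s)$ vanishes.

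If $a(s)=0$, then from $a=x_{1}'/\sqrt{\varepsilon((x_{1}')^{2}-1)}$ we get $x_{1}'=0$, hence $x_{1}=\delta_{1}$, $\varepsilon=-1$, $c=0$ and $b=1/\delta_{1}$, and the surface is automatically flat. The unit-speed spacelike condition collapses to $(x_{2}')^{2}-(x_{4}')^{2}=1$, so I set $x_{2}'=\cosh\delta(s)$, $x_{4}'=\sinh\delta(s)$. The relation $M=0$ together with the formula for $d$ forces $d(s)=\delta_{2}$ constant, whence the definition of $d$ gives $\delta(s)=-\delta_{2}s+\delta_{3}$; integrating yields profile (i). Here $c'=b'=0$ make $N=0$, so $\Delta G=L\,G=\left(\tfrac{1}{\delta_{1}^{2}}-\delta_{2}^{2}\right)G$, i.e. first kind with $C=0$. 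If instead $a(s)\neq0$, flatness $K=cb=0$ forces $c=0$, hence $x_{1}''=0$ and $x_{1}=\lambda_{1}s+\lambda_{2}$, which pins down $a$ and $b$; then $M=0$ with $c=0$ reduces to the linear ODE $d'+ab\,d=0$, solved by $d(s)=\lambda_{3}/(\lambda_{1}s+\lambda_{2})$. Now $(x_{2}')^{2}-(x_{4}')^{2}=1-\lambda_{1}^{2}<0$, so I write $x_{2}'=(\lambda_{1}^{2}-1)^{1/2}\sinh\lambda(s)$, $x_{4}'=(\lambda_{1}^{2}-1)^{1/2}\cosh\lambda(s)$; the definition of $d$ gives a second expression $d=(\lambda_{1}^{2}-1)^{1/2}\lambda'(s)$, and equating it with $\lambda_{3}/(\lambda_{1}s+\lambda_{2})$ and integrating produces the logarithmic angle function and hence profile (ii). The converse is the routine check that each profile satisfies (1) with the stated $f$ and $C$.

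The principal obstacle is purely bookkeeping: the factors $\varepsilon$ in $L,M,N$ and in $\langle e_{3},e_{3}\rangle=-\langle e_{4},e_{4}\rangle=\varepsilon$, the sign $\langle e_{2},e_{2}\rangle=-1$ in the $\wedge^{2}$ product, and the switch from circular to hyperbolic parametrizations must all be tracked consistently so that the final $f$ and $C=-\lambda_{1}^{2}e_{1}\wedge e_{2}+\lambda_{1}(\lambda_{1}^{2}-1)^{1/2}e_{2}\wedge e_{4}$ emerge with the signs displayed in the statement. A minor subtlety worth flagging is that the dominant component of the spacelike tangent switches between the two cases ($x_{2}'$ dominates when $a=0$, $x_{4}'$ dominates when $a\neq0$), which is precisely what produces the $\cosh/\sinh$ interchange between profiles (i) and (ii).
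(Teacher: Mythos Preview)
Your proposal is correct and is precisely the approach the paper intends: the authors explicitly omit the proof of Theorem~\ref{teo 5}, stating that it is obtained from the proof of Theorem~\ref{teo 2} by the same computation, and your sketch carries out exactly that transcription---expanding $\Delta G=f(G+C)$, differentiating the vanishing components of $C$ along $e_{2}$ to force $M(s)=0$, splitting on $a(s)=0$ versus $a(s)\neq0$, and replacing the circular parametrization by the hyperbolic one. The sign and $\varepsilon$ bookkeeping you flag (in particular $d=(\lambda_{1}^{2}-1)^{1/2}\lambda'$ without the minus sign present in the elliptic formula~(35)) is handled correctly and delivers the stated $f$ and $C$.
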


\begin{theorem}
	\label{teo 6} A non- minimal rotational surfaces of hyperbolic type $M_{2}$
	defined by (37) has pointwise 1-type Gauss map of the first kind if and only
	if $M_{2}$ has parallel mean curvature vector
\end{theorem}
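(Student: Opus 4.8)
The plan is to follow verbatim the argument of Theorem~\ref{teo 3}, transplanted to the hyperbolic setting, because both the Laplacian of the Gauss map and the normal derivative of the mean curvature vector are governed by the very same pair of functions $M(s)$ and $N(s)$. Recall $G=e_{1}\wedge e_{2}$, so that pointwise $1$-type of the first kind means $\Delta G=fG$ for a smooth function $f$; since $e_{1}\wedge e_{2}$, $e_{2}\wedge e_{3}$, $e_{2}\wedge e_{4}$ are linearly independent in $\wedge^{2}\mathbb{E}_{2}^{4}$ and $\Delta G=L(s)(e_{1}\wedge e_{2})+M(s)(e_{2}\wedge e_{3})+N(s)(e_{2}\wedge e_{4})$, this condition is equivalent to $M(s)=N(s)=0$, with $f=L(s)$.

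First I would compute $D_{e_{1}}H$. Writing $H=\tfrac{1}{2}\big(\varepsilon d(s)e_{3}-\varepsilon(c(s)+\varepsilon b(s))e_{4}\big)$ and reading the normal connection off the covariant derivatives in (39)—explicitly $D_{e_{1}}e_{3}=-\varepsilon a(s)d(s)e_{4}$ and $D_{e_{1}}e_{4}=-\varepsilon a(s)d(s)e_{3}$, which also follow from $\omega_{34}=a(s)d(s)\omega_{1}$ together with $\varepsilon_{3}=\varepsilon$, $\varepsilon_{4}=-\varepsilon$—I expect the $\varepsilon$-factors to collapse into the clean identity
\begin{equation*}
D_{e_{1}}H=\tfrac{1}{2}\big(M(s)e_{3}+N(s)e_{4}\big).
\end{equation*}
The $e_{2}$-direction is automatic: the coefficients of $H$ depend on $s$ alone, while $\omega_{34}(e_{2})=a(s)d(s)\,\omega_{1}(e_{2})=0$ because $\omega_{1}=ds$, so that $D_{e_{2}}e_{3}=D_{e_{2}}e_{4}=0$ and hence $D_{e_{2}}H=0$. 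Therefore $DH=0$ is equivalent to $M(s)=N(s)=0$.

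Both implications are then immediate. If $M_{2}$ has parallel mean curvature vector then $DH=0$ forces $M(s)=N(s)=0$, so $\Delta G=L(s)(e_{1}\wedge e_{2})=L(s)G$ and $M_{2}$ is of pointwise $1$-type of the first kind. Conversely, if $\Delta G=fG$ then the vanishing of the $e_{2}\wedge e_{3}$ and $e_{2}\wedge e_{4}$ components gives $M(s)=N(s)=0$, whence $D_{e_{1}}H=0$, and together with $D_{e_{2}}H=0$ this yields $DH=0$. The non-minimality hypothesis serves only to keep the statement informative: if $H=0$ the parallel condition is vacuous and both sides hold automatically.

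The single delicate point is the sign bookkeeping in $D_{e_{1}}H$. One must carry the causal characters $\varepsilon_{3}=\varepsilon$, $\varepsilon_{4}=-\varepsilon$ against the overall $\varepsilon$ appearing in $H$, $M(s)$ and $N(s)$, and check that every $\varepsilon^{2}$ reduces to $1$ so that the two normal components reproduce exactly $\tfrac{1}{2}M(s)$ and $\tfrac{1}{2}N(s)$—just as in the elliptic case, except that the $e_{3}$ slot now carries $+M(s)$ rather than $-M(s)$. Since this sign flip does not affect the vanishing conditions $M(s)=N(s)=0$, once the $\varepsilon$'s are verified to cancel, the remainder is the identical linear-algebra argument used for Theorem~\ref{teo 3}.
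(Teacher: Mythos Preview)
Your proposal is correct and is exactly the approach the paper intends: the paper explicitly omits the proof of Theorem~\ref{teo 6}, stating that it is ``similar [to] the proofs of theorems in [the] previous section,'' i.e.\ to transplant the argument of Theorem~\ref{teo 3} to the hyperbolic frame. Your sign bookkeeping is accurate---in particular $D_{e_{1}}H=\tfrac{1}{2}\big(M(s)e_{3}+N(s)e_{4}\big)$ with the $\varepsilon$'s cancelling as you say---so the equivalence $DH=0\Longleftrightarrow M(s)=N(s)=0\Longleftrightarrow \Delta G=L(s)G$ goes through verbatim.
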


\begin{corollary}
	\label{cor 2}If rotational surfaces of hyperbolic type $M_{2}$ given by (37)
	is minimal then it has pointwise 1-type Gauss map of the first kind.
\end{corollary}

\subsection{Rotational surfaces of parabolic type with pointwise 1-type
	Gauss map in $\mathbb{E}_{2}^{4}$}

In this subsection, we study rotational surfaces of parabolic type with
pointwise 1-type Gauss map. We show that flat rotational surface of
parabolic type has pointwise 1-type Gauss map if and only if its Gauss map
is harmonic. Also we conclude that flat rotational surface of parabolic type
has harmonic Gauss map if and only if it has parallel mean curvature vector.

We consider the pseudo-orthonormal base $\left \{ \epsilon _{1},\xi _{2},\xi
_{3},\epsilon _{4}\right \} $ of $\mathbb{E}_{2}^{4}$\ such that $\xi _{2}=%
\frac{\epsilon _{2}+\epsilon _{3}}{\sqrt{2}},$\ $\xi _{3}=\frac{-\epsilon
	_{2}+\epsilon _{3}}{\sqrt{2}}$ $\left \langle \xi _{2},\xi _{2}\right \rangle
=\left \langle \xi _{3},\xi _{3}\right \rangle =0$ and $\left \langle \xi
_{2},\xi _{3}\right \rangle =-1.$ Let consider $\alpha $ spacelike curve is
given by%
\begin{equation*}
x\left( s\right) =x_{1}(s)\epsilon _{1}+x_{2}(s)\epsilon
_{2}+x_{3}(s)\epsilon _{3}
\end{equation*}%
or we can express $x$\ according to pseudo-orthonormal base $\left \{
\epsilon _{1},\xi _{2},\xi _{3},\epsilon _{4}\right \} $ as follows:%
\begin{equation*}
x\left( s\right) =x_{1}(s)\epsilon _{1}+p(s)\xi _{2}+q(s)\xi _{3},
\end{equation*}%
where $p(s)=\frac{x_{2}(s)+x_{3}(s)}{\sqrt{2}}$ and $q(s)=\frac{%
	-x_{2}(s)+x_{3}(s)}{\sqrt{2}}.$\ The rotational surface of parabolic type $%
M_{3}$ is defined by%
\begin{equation}
M_{3}:\varphi \left( t,s\right) =x_{1}(s)\epsilon _{1}+p(s)\xi
_{2}+(-t^{2}p(s)+q(s))\xi _{3}+\sqrt{2}tp(s)\epsilon _{4},
\end{equation}%
We suppose that $x$\ is parameterized by arc-length, that is, $\left(
x_{1}{}^{\prime }(s)\right) ^{2}-2p^{\prime }(s)q^{\prime }(s)=1.$ Now we
can give a moving orthonormal frame $\left \{ e_{1},e_{2},e_{3},e_{4}\right \} 
$\ for $M_{3}$ as follows:%
\begin{eqnarray*}
	e_{1} &=&x_{1}{}^{\prime }(s)\epsilon _{1}+p^{\prime }(s)\xi
	_{2}+(-t^{2}p^{\prime }(s)+q^{\prime }(s))\xi _{3}+\sqrt{2}tp^{\prime
	}(s)\epsilon _{4}, \\
	e_{2} &=&-\sqrt{2}t\xi _{3}+\epsilon _{4}, \\
	e_{3} &=&\epsilon _{1}+\frac{x_{1}{}^{\prime }(s)}{p^{\prime }(s)}\xi _{3},
	\\
	e_{4} &=&x_{1}{}^{\prime }(s)\epsilon _{1}+p^{\prime }(s)\xi _{2}+(\frac{1}{%
		p^{\prime }(s)}+q^{\prime }(s)-t^{2}p^{\prime }(s))\xi _{3}+\sqrt{2}%
	tp^{\prime }(s)\epsilon _{4},
\end{eqnarray*}%
where $p^{\prime }(s)$ is non zero. Then it is easily seen that 
\begin{equation*}
\left \langle e_{1},e_{1}\right \rangle =\left \langle e_{3},e_{3}\right \rangle
=1,\text{ }\left \langle e_{2},e_{2}\right \rangle =\left \langle
e_{4},e_{4}\right \rangle =-1.
\end{equation*}%
We have the dual 1-forms as: 
\begin{equation*}
\omega _{1}=ds\text{ \  \  \  \ and \  \  \  \ }\omega _{2}=-\sqrt{2}p\left(
s\right) dt.
\end{equation*}%
Also we obtain components of the second fundamental form and the connection
forms as:%
\begin{eqnarray}
h_{11}^{3} &=&c(s),\ h_{12}^{3}=0,\ h_{22}^{3}=0, \\
h_{11}^{4} &=&-b(s),\text{ \ }h_{12}^{4}=0,\text{ \ }h_{22}^{4}=a(s)  \notag
\end{eqnarray}%
and%
\begin{eqnarray*}
	\omega _{12} &=&a(s)\omega _{2},\text{ \  \ }\omega _{13}=c(s)\omega _{1},%
	\text{ \  \ }\omega _{14}=-b(s)\omega _{1}, \\
	\omega _{23} &=&0,\text{ \  \ }\omega _{24}=-a(s)\omega _{2},\text{ \  \ }%
	\omega _{34}=-c(s)\omega _{1}.
\end{eqnarray*}%
Then we get the covariant differentiations with respect to $e_{1}$ and $e_{2}
$ as follows:%
\begin{eqnarray}
\tilde{\nabla}_{e_{1}}e_{1} &=&c(s)e_{3}+b(s)e_{4}, \\
\tilde{\nabla}_{e_{2}}e_{1} &=&a(s)e_{2},  \notag \\
\tilde{\nabla}_{e_{1}}e_{2} &=&0,  \notag \\
\tilde{\nabla}_{e_{2}}e_{2} &=&a(s)e_{1}-a(s)e_{4},  \notag \\
\tilde{\nabla}_{e_{1}}e_{3} &=&-c(s)e_{1}+c(s)e_{4},  \notag \\
\tilde{\nabla}_{e_{2}}e_{3} &=&0,  \notag \\
\tilde{\nabla}_{e_{1}}e_{4} &=&b(s)e_{1}+c(s)e_{3},  \notag \\
\tilde{\nabla}_{e_{2}}e_{4} &=&a(s)e_{2},  \notag
\end{eqnarray}%
where 
\begin{equation}
a(s)=\frac{p^{\prime }(s)}{p(s)},
\end{equation}%
\begin{equation}
b(s)=\frac{p^{\prime \prime }(s)}{p^{\prime }(s)},
\end{equation}%
\begin{equation}
c(s)=\frac{x_{1}^{\prime \prime }(s)p^{\prime }(s)-p^{\prime \prime
	}(s)x_{1}^{\prime }(s)}{p^{\prime }(s)}.
\end{equation}%
By using (3), (4) and (41), the mean curvature vector and Gaussian curvature
of the surface $M_{3}$ are obtained as follows:%
\begin{equation}
H=\frac{1}{2}\left( c(s)e_{3}+\left( a(s)+b\left( s\right) \right)
e_{4}\right) 
\end{equation}%
and 
\begin{equation}
K=a(s)b\left( s\right) ,
\end{equation}%
respectively.

By using (2) and (42) the Laplacian of the Gauss map of $M_{3}$\ is computed
as:%
\begin{equation}
\Delta G=L(s)\left( e_{1}\wedge e_{2}\right) +M(s)\left( e_{2}\wedge
e_{3}\right) +N(s)\left( e_{2}\wedge e_{4}\right) ,
\end{equation}%
where 
\begin{equation}
L(s)=c^{2}(s)-a^{2}\left( s\right) -b^{2}\left( s\right) ,
\end{equation}%
\begin{equation}
M(s)=c^{\prime }\left( s\right) +c(s)(a(s)+b(s)),
\end{equation}%
\begin{equation}
N(s)=c^{2}(s)+a^{\prime }\left( s\right) +b^{\prime }\left( s\right) .
\end{equation}

\begin{theorem}
	\label{teo 7}Let $M_{3}$ be flat rotation surface of parabolic type given by
	the parameterization (40). Then $M_{3}$ has pointwise 1-type Gauss map if
	and only if the profile curve of $M_{3}$ is given by%
	\begin{eqnarray*}
		x_{1}(s) &=&\frac{\varepsilon }{\mu _{1}}\left( \ln (\mu _{1}s+\mu _{2})(\mu
		_{1}s+\mu _{2})\right) +\left( \mu _{4}-\varepsilon \right) s+\mu _{5}, \\
		p(s) &=&\mu _{1}s+\mu _{2}, \\
		q(s) &=&\frac{1}{2\mu _{1}}\int \left( \left( \varepsilon \ln \left( \mu
		_{1}s+\mu _{2}\right) +\mu _{4}\right) ^{2}-1\right) ds,
	\end{eqnarray*}%
	where $\mu _{1},$ $\mu _{2},$ $\mu _{4},$ $\mu _{5}$ real constants. Morever
	the surface $M_{3}$ has harmonic Gauss map for $f=0.$
\end{theorem}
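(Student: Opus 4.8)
The plan is to dispose of the flatness constraint first and then read off the pointwise 1-type condition by differentiating the constant vector $C$ along the rotational direction $e_{2}$. By (47) the Gaussian curvature is $K=a(s)b(s)$, and since $a(s)=p'(s)/p(s)$ with $p'(s)\neq 0$ we have $a(s)\neq 0$; hence $K=0$ forces $b(s)=0$. By (44) this gives $p''(s)=0$, so $p(s)=\mu_{1}s+\mu_{2}$ with $\mu_{1}\neq 0$. The decisive observation is that then
\[
a(s)=\frac{\mu_{1}}{\mu_{1}s+\mu_{2}},\qquad a'(s)=-a^{2}(s).
\]
Substituting $b=0$ and $a'=-a^{2}$ into (49)--(51) yields
\[
L=c^{2}-a^{2},\qquad M=c'+ca,\qquad N=c^{2}+a'=c^{2}-a^{2}=L,
\]
so the identity $N\equiv L$ holds on every flat surface of this family; this is the crux that will eliminate the second-kind possibility.

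Next I would invoke the pointwise 1-type hypothesis. Writing $G=e_{1}\wedge e_{2}$ and using (48), equation (1) becomes $fC=(L-f)\,e_{1}\wedge e_{2}+M\,e_{2}\wedge e_{3}+N\,e_{2}\wedge e_{4}$. Since $C$ is a constant vector of the ambient space and $f,L,M,N$ depend on $s$ only, while $e_{2}$ is tangent to the rotational orbits so that $e_{2}f=0$, applying the flat ambient connection gives $\tilde{\nabla}_{e_{2}}(fC)=0$. From (42) with $b=0$ one computes
\[
\tilde{\nabla}_{e_{2}}(e_{1}\wedge e_{2})=-a\,e_{1}\wedge e_{4},\quad
\tilde{\nabla}_{e_{2}}(e_{2}\wedge e_{3})=a\,e_{1}\wedge e_{3}+a\,e_{3}\wedge e_{4},\quad
\tilde{\nabla}_{e_{2}}(e_{2}\wedge e_{4})=a\,e_{1}\wedge e_{4}.
\]
Collecting components of $\tilde{\nabla}_{e_{2}}(fC)=0$, the $e_{1}\wedge e_{3}$ (and $e_{3}\wedge e_{4}$) terms give $aM=0$, hence $M=0$, and the $e_{1}\wedge e_{4}$ term gives $a(N-L+f)=0$, hence $f=L-N$. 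By the identity $N\equiv L$ this forces $f=0$, so $\Delta G=0$: the Gauss map is harmonic and $L=M=N=0$.

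It remains to integrate. From $L=0$ and $N=0$ I get $c^{2}=a^{2}$, i.e. $c=\varepsilon a$ with $\varepsilon=\pm 1$ locally constant (as $a\neq 0$); then $M=c'+ca=\varepsilon(a'+a^{2})=0$ is automatic. Since $p''=0$, formula (45) reduces to $c=x_{1}''$, so $x_{1}''(s)=\varepsilon\mu_{1}/(\mu_{1}s+\mu_{2})$; integrating twice gives $x_{1}'(s)=\varepsilon\ln(\mu_{1}s+\mu_{2})+\mu_{4}$ and then the stated $x_{1}(s)$, the integration constants appearing as $\mu_{4},\mu_{5}$. Finally the unit-speed condition $(x_{1}')^{2}-2p'q'=1$ gives $q'(s)=\big((x_{1}')^{2}-1\big)/(2\mu_{1})$, and a last integration produces the stated $q(s)$.

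For the converse I would run these computations backwards: a profile curve of the displayed form satisfies $b=0$ and $c=\varepsilon a$, so $L=N=0$ and $M=\varepsilon(a'+a^{2})=0$ by $a'=-a^{2}$, whence $\Delta G=0$ and the surface has harmonic --- in particular pointwise 1-type --- Gauss map with $f=0$. I expect the only real obstacle to be bookkeeping: correctly evaluating the bivector covariant derivatives from (42) and spotting the flatness identity $a'=-a^{2}$. Once $N\equiv L$ is in hand, the single relation $\tilde{\nabla}_{e_{2}}(fC)=0$ delivers $f=0$ immediately, and the rest is routine integration; in particular no separate use of $\tilde{\nabla}_{e_{1}}$ is needed.
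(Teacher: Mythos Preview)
Your argument is correct and follows the same strategy as the paper: differentiate the pointwise 1-type relation along $e_{2}$ to extract $M=0$ and $f=L-N$, use flatness to force $f=0$, and then integrate. Two minor differences are worth noting. First, your upfront observation that flatness alone gives $a'=-a^{2}$ and hence $N\equiv L$ is a mild streamlining; the paper instead solves $M=0$ for $c$ first and then checks $L=N$ by direct substitution. Second, the paper obtains the relations $M=0$ and $f=L-N$ by differentiating the scalar constraints $\langle C,e_{1}\wedge e_{3}\rangle=\langle C,e_{1}\wedge e_{4}\rangle=\langle C,e_{3}\wedge e_{4}\rangle=0$ along $e_{2}$, which needs only the constancy of $C$; you instead differentiate the vector $fC$ and invoke $e_{2}f=0$. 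That claim is true but not immediate from the definition (a priori $f$ is any smooth function on $M_{3}$), so a line of justification---e.g.\ comparing the span of $\tilde\nabla_{e_{2}}$ of the right-hand side with that of $(e_{2}f)(G+C)$---would tighten your write-up.
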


\begin{proof}
	We suppose that $M_{3}$ has pointwise 1-type Gauss map. In that case the
	Gauss map of $M_{3}$\ holds (1). By using (1) and (48), we get 
	\begin{eqnarray}
	-f+f\left \langle C,e_{1}\wedge e_{2}\right \rangle  &=&-L(s), \\
	f\left \langle C,e_{2}\wedge e_{3}\right \rangle  &=&-M(s),  \notag \\
	f\left \langle C,e_{2}\wedge e_{4}\right \rangle  &=&N(s)  \notag
	\end{eqnarray}%
	and 
	\begin{equation}
	\left \langle C,e_{1}\wedge e_{3}\right \rangle =\left \langle C,e_{1}\wedge
	e_{4}\right \rangle =\left \langle C,e_{3}\wedge e_{4}\right \rangle =0.
	\end{equation}%
	By taking the derivatives of all equations in (53) with respect to $e_{2}$
	and using (52) we obtain 
	\begin{eqnarray}
	L(s)-N(s) &=&f, \\
	M(s) &=&0,  \notag
	\end{eqnarray}%
	respectively$.$ Since the surface $M_{3}$ is flat, i.e., $K=0$ from (47) we
	have that $b(s)=0$. From (44) we obtain that 
	\begin{equation}
	p(s)=\mu _{1}s+\mu _{2}
	\end{equation}%
	for some constants $\mu _{1}\neq 0$ and $\mu _{2}.$ By using (43) and (55)
	we have that 
	\begin{equation}
	a(s)=\frac{\mu _{1}}{\mu _{1}s+\mu _{2}}.
	\end{equation}%
	If we consider $M(s)=0$ with the equations $b(s)=0$ and $a(s)=\frac{\mu _{1}%
	}{\mu _{1}s+\mu _{2}},$ from (50) we get 
	\begin{equation}
	c(s)=\frac{\mu _{3}}{\mu _{1}s+\mu _{2}}.
	\end{equation}%
	On the other hand, by using the first equation of (54), (49), (51), (56) and
	(57) we obtain that $f=0$. It means that $L(s)=N(s)=0$ and we have%
	\begin{equation*}
	\mu _{3}=\varepsilon \mu _{1},\  \varepsilon =\pm 1.
	\end{equation*}%
	If we consider (45), (55) and (57) we get 
	\begin{equation}
	x_{1}(s)=\frac{\varepsilon }{\mu _{1}}\left( \ln (\mu _{1}s+\mu _{2})(\mu
	_{1}s+\mu _{2})\right) +\left( \mu _{4}-\varepsilon \right) s+\mu _{5},
	\end{equation}%
	where $\mu _{4},$ $\mu _{5}$ are constants of integration. Since $x$ is unit
	speed spacelike curve we get%
	\begin{equation}
	q^{\prime }(s)=\frac{\left( x_{1}{}^{\prime }(s)\right) ^{2}-1}{2p^{\prime
		}(s)}.
	\end{equation}%
	By substituting (55) and (58) into (59) we obtain%
	\begin{equation*}
	q(s)=\frac{1}{2\mu _{1}}\int \left( \left( \varepsilon \ln \left( \mu
	_{1}s+\mu _{2}\right) +\mu _{4}\right) ^{2}-1\right) ds.
	\end{equation*}%
	This completes the proof.
\end{proof}

\begin{theorem}
	\label{teo 8} Let $M_{3}$ be flat rotational surfaces of parabolic type
	given by (40). $M_{3}$ has harmonic\ Gauss map if and only if its mean
	curvature vector is parallel.
\end{theorem}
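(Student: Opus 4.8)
The plan is to express both hypotheses purely in terms of the functions $L(s)$, $M(s)$, $N(s)$ of (49)--(51) and then exploit flatness to link them. First I would compute the normal covariant derivative $DH$ of the mean curvature vector (46). Using the covariant differentiation formulas (42), or equivalently the single nonzero normal connection form $\omega_{34}=-c(s)\omega_1$, the normal parts of $\tilde\nabla_{e_1}e_3$ and $\tilde\nabla_{e_1}e_4$ are $c\,e_4$ and $c\,e_3$ respectively. Since $a,b,c$ depend on $s$ alone and $\omega_{34}(e_2)=0$, one finds $D_{e_2}H=0$ identically, while
\[
D_{e_1}H=\tfrac12\bigl(M(s)e_3+N(s)e_4\bigr).
\]
Hence $M_3$ has parallel mean curvature vector, i.e. $DH=0$, if and only if $M(s)=N(s)=0$. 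On the other hand, by (48) the Gauss map is harmonic, $\Delta G=0$, precisely when $L(s)=M(s)=N(s)=0$.

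With these two reformulations the forward implication is immediate: if $\Delta G=0$, then in particular $M=N=0$, so $DH=0$ and the mean curvature vector is parallel.

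The converse is the heart of the statement, and the one place where flatness enters. Assuming $DH=0$ I have $M=N=0$, and I must still produce $L=0$. Here I would use $K=a(s)b(s)=0$ from (47) together with the standing hypothesis $p'(s)\neq 0$: since $a=p'/p\neq 0$, flatness forces $b\equiv 0$. Differentiating $a=p'/p$ and substituting $b=p''/p'$ yields the identity $a'=ab-a^2$, which under $b=0$ becomes $a'=-a^2$. Then $N=c^2+a'+b'=c^2-a^2=0$ gives $c^2=a^2$, and therefore $L=c^2-a^2-b^2=0$. Thus $\Delta G=0$ and the Gauss map is harmonic.

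The only genuinely delicate point is the last step: recognizing that flatness is not merely the condition $b=0$ but, through the relation $a'=-a^2$, exactly converts the vanishing of $N$ (which controls the $e_2\wedge e_4$ component of $\Delta G$) into the vanishing of $L$ (the $e_1\wedge e_2$ component). This is precisely what makes the two conditions equivalent in the parabolic case, where both $L$ and $N$ carry the same $c^2$ term.
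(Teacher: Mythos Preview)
Your proof is correct and follows the same strategy as the paper's: both reduce $DH=0$ to $M=N=0$ and harmonicity of $G$ to $L=M=N=0$, then use flatness ($b=0$) to deduce $L=0$ from $N=0$. The only difference is cosmetic---the paper first integrates explicitly to obtain $a(s)=\mu_{1}/(\mu_{1}s+\mu_{2})$ and $c(s)=\mu_{3}/(\mu_{1}s+\mu_{2})$ before concluding $L=0$, whereas you bypass this with the identity $a'=ab-a^{2}$, which is a bit cleaner and makes the role of $N=0$ more transparent.
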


\begin{proof}
	We suppose that $M_{3}$ has parallel mean curvature vector, i.e., $DH=0.$
	From (46) we have that 
	\begin{equation*}
	D_{e_{1}}H=\frac{1}{2}\left( M(s)e_{3}+N(s)e_{4}\right) =0.
	\end{equation*}%
	In this case we obtain that $M(s)=N(s)=0.$ Since $M_{3}$ is a flat surface,
	from the previous theorem we have 
	\begin{equation*}
	b(s)=0\text{ and }a(s)=\frac{\mu _{1}}{\mu _{1}s+\mu _{2}}.
	\end{equation*}%
	By considering the equation $M(s)=0$ with above equations and using (50) we
	get 
	\begin{equation*}
	c(s)=\frac{\mu _{3}}{\mu _{1}s+\mu _{2}},
	\end{equation*}%
	where $\mu _{3}$\ is the constant of integration. It implies that $L(s)=0.$
	Hence we obtain that Gauss map of $M_{3}$ is harmonic .
	
	Conversely, if $M_{3}$ is harmonic then it is easily seen that $DH=0.$
\end{proof}

The first author is supported by Ahi Evran University :PYO-EGF.4001.15.002.

\end{document}